\documentclass[preprint]{elsarticle}

\usepackage{xcolor}
 \usepackage{amsthm}
\usepackage{soul}
\usepackage{comment}
\usepackage{amsmath,amssymb}
\usepackage[cal=boondox,scr=boondoxo]{mathalfa}
\usepackage{bigints}
\usepackage{todonotes}
\presetkeys{todonotes}{color=orange!30}{}
\usepackage{url}
\def\mm{\textcolor{magenta}} 

\def \CC{{\bold C}}
\usepackage{amsmath}

\newtheorem{theo}{Theorem}[section]
\usepackage{yfonts}

\newtheorem{lemma}{Lemma}[section]

  \newtheorem{remark}{Remark}[section]
\newtheorem{definition}{Definition}[section]

\DeclareMathAlphabet{\mathpzc}{OT1}{pzc}{m}{it} 

\def \M{\mathcal M}
\def \W{\mathcal W}
\def \K{\mathcal K}

\def\C{\mathcal{C}}
\def\U{\mathcal{U}}
\def \cR{\mathbb R}
\def \rr{\mathbb R}

\def \cN{\mathbb N}

\def \cS{\mathcal S}
\def \N{\mathcal N}

\def \B{\mathbb B}
\def \eps{\varepsilon}

\def \h{\Psi}

\def\vz{\mathpzc{v}}
\def\d{ {\rm d} }

\def \vsm{\vskip 0.2 truecm}
\def \ds{\displaystyle}

\journal{Systems and Control Letters}
\begin{document}
\begin{frontmatter}

\title{ 
Higher-Order Normality and No-Gap Conditions in Impulsive Control with 
 $L^1$-Control Topology
  \tnoteref{label1}}
\tnotetext[label1]{This research is partially supported by INdAM-GNAMPA Project 2024, CUP E53C2300167000, and by PRIN 2022, Prot. 2022238YY5, CUP C53D23002370006.}
\author[1]{Monica Motta\corref{cor1}}
\ead{motta@math.unipd.it}

\author[2]{Michele Palladino}
\ead{michele.palladino@univaq.it}

\author[1]{Franco Rampazzo}
\ead{rampazzo@math.unipd.it}

\cortext[cor1]{Corresponding author.}

\address[1]{University of Padova, Department of Mathematics "Tullio Levi-Civita",
via Trieste 63, Padova, 35121, Italy}
\address[2]{University of  L'Aquila, Department of Engineering, Information Sciences and Mathematics, via Vetoio, 67100 L'Aquila, Italy }

 \date{\today}

%
%
%
%
%

\begin{abstract}
In optimal control, extending the class of admissible controls is a common strategy to guarantee the existence of optimal solutions. However, such extensions may introduce a gap between the infimum of the original problem and the minimum of the extended one, especially in the presence of endpoint constraints. Since Warga's seminal work, normality of first-order necessary conditions for extended minimizers has been recognized as a sufficient condition to avoid this phenomenon, though it is far from being necessary.
In this paper, we consider impulsive extensions of control-affine systems with unbounded controls. We establish that a notion of \textit{higher-order normality}, based on iterated Lie brackets of the system’s vector fields, suffices to prevent an infimum gap. The key novelty of this manuscript consists in showing that this holds under a local topology defined by the $L^1$-distance between controls, rather than the more common $L^\infty$-distance between trajectories. Among the reasons that motivate  the interest in this issue, let us mention that a counterexample by R. B. Vinter shows that for a different extension --based on convexification of the velocity set-- a local extended minimizer that is normal with respect to the $L^1$-norm  of the controls may still exhibit a gap. 
Our method relies on set-separation techniques. Such an approach makes it possible to derive higher-order  conditions and to exploit the corresponding notion of higher-order normality.
\end{abstract}


\begin{keyword}
 {Optimal control \sep Infimum gap \sep   Impulsive control \sep High order   conditions }
 \MSC[2008] 49K15 \sep 49N25 \sep 93C10   
\end{keyword}

\end{frontmatter}

\section{Introduction} 


When an optimal control problem fails to have a local minimizer, a common strategy consists  of: 
\begin{itemize}
	 \item[i)] embedding the optimal control problem of interest --which, from now on we will refer to  as the {\it 
original optimal control problem}-- in a larger one  --which we will refer to as the {\it extended optimal control}-- that possesses a local minimizer,
\item[ii)] approximating the minimizer of the extended optimal control problem with solutions of the original optimal control problem.
\end{itemize}

Such a procedure can be successfully carried out  as long as an ``infimum gap" does not occur, that is as long as the infimum value of the original optimal control problem coincides with the minimum value of the extended optimal control problem. Therefore, it is of interest to provide conditions guaranteeing that the optimal control problem under consideration does not have an infimum gap.

 Some early results by J.Warga  have inspired the idea that,  under certain hypothesis, a general criterion for the absence of an infimum gap can be stated as follows:
\begin{equation}\label{ass_intro}
\begin{array}{ll}
\textit{If a set of necessary conditions holds true in normal form}, \\ 
\textit{then the infimum gap does not occur."}  
\end{array}
\end{equation}

The expression ``in normal form" here means that the corresponding cost multiplier is  different from zero for each set of multipliers. As a simple consequence of statement \eqref{ass_intro}, any optimal control problem without end-point constraints and state constraints does not have an infimum gap.

In fact, it turns out that the validity of statement \eqref{ass_intro} heavily depends on several aspects, such as:
\begin{itemize}
\item[i)] the type of dynamics appearing in the optimal control problem (e.g., multifunctions, nonlinear control differential equations, control-affine nonlinear differential equations, etc.);
\item[ii)] the type of extension one is carrying out on the original optimal control problem (relaxation, impulsive extension, etc.);
\item[iii)] the topology that one considers in the notion of local minimizer ($L^\infty$, $W^{1,1}$, etc.);
\item[iv)] the type of necessary conditions taken into account.
\end{itemize}

As suggested above, the first result making statement \eqref{ass_intro} precise and effective was obtained by J. Warga in \cite{warga2}, \cite{warga}, \cite{warga1}: in these works, the controlled differential equations are extended by relaxation, namely by taking the convex hull of the controlled dynamics. Other results in this direction were obtained in \cite{PR}, \cite{PV2}, and \cite{FM222}, \cite{FM322} (in the last two mentioned papers, the focus was particularly placed on problems with state constraints and on non-degenerate necessary conditions).

Let us highlight, at this stage, that all of these results hold true for local minimizers with respect to the $L^\infty$-metric, the latter being the one compatible with the well-known Relaxation Theorem \cite{warga}.

The problem of extending statement \eqref{ass_intro} to the case in which the necessary conditions are applied to a $W^{1,1}$-local minimizer was posed for the first time in \cite{ioffe} for the case in which the control dynamics of the optimal control problem takes the form of a non-convex differential inclusion and the necessary condition's adjoint equation is expressed in terms of the "partially convexified Euler-Lagrange equation" (notice that, in \cite{ioffe}, the extension considered is still the standard relaxation extension). However, a counter-example to this conjecture (expressed in terms of a non-convex differential inclusion) was presented in \cite{vinter14}; furthermore, it was proved in \cite{PV1} that statement \eqref{ass_intro} holds true for $L^\infty$-local minimizers when the necessary condition's adjoint equation is expressed in terms of the "fully convexified Hamiltonian Inclusion".

Another common extension is the so-called impulsive extension (see e.g.,   \cite{AKP1}, \cite{BR}, \cite{MR}). This case has been mainly investigated when the dynamics is (non-linear and) control-affine, and the controls are unbounded. Roughly speaking, the original control system is embedded --through time-reparameterization techniques or a suitable use of measures-- into a new one where the state is allowed to evolve instantaneously. For the impulsive extension, the first result showing the validity of assertion \eqref{ass_intro} was proved in \cite{MRV} for a notion of $L^\infty$-local minimizer and a set of necessary conditions expressed in terms of the Pontryagin Maximum Principle. An improvement of this result for the case in which a state constraint is added to the problem formulation was obtained in \cite{FM121}. On the other hand, it is interesting to observe that the result in \cite{FM124} is the first one to show that assertion \eqref{ass_intro} holds true also for the case in which one deals with a $W^{1,1}$-local minimizer, when the impulsive extension is taken into account. (Actually, the type of extension discussed in \cite{FM124} is more general than the impulsive extension; however, it is not general enough to encompass the relaxation extension.) Still, for the impulsive extension and for $L^\infty$-local minimizers, it was shown in \cite{MPR} that statement \eqref{ass_intro} holds true even when one considers higher-order necessary conditions expressed in terms of (iterated) Lie brackets. Notice that, while most of the results mentioned so far (with the exception of \cite{PR}) are proved using perturbation arguments based on Ekeland's Variational Principle (or on Stegall's Variational Principle), the higher-order no-gap condition proved in \cite{MPR} is based on a set-separation approach following some general ideas from \cite{sus} and \cite{Suss1}.

The main goal of this article is to show that the results proved in \cite{MPR} can be extended to the case in which one replaces the "strong" notion of $L^\infty$-local minimizer with the "weak" notion of $W^{1,1}$-local minimizer. This result will be shown by using a set-separation approach, and by carefully adapting the analysis carried out in \cite{MPR}. The crucial aspect to address is to show that the optimal, extended -sense control and the approximating, original controls remain "sufficiently close" with respect to the $L^1$-norm of the controls. These technical aspects are summarized in Lemmas \ref{L0}, \ref{L1}, and \ref{L2}.

\subsection{Notations}{We now summarize the main notations that we will use throughout the paper.} The state space is modeled as a Riemannian manifold $(\M, \langle\cdot,\cdot\rangle)$, and the endpoint constraint (or \textit{target}) is defined as a closed subset $\cS \subseteq [0,+\infty) \times \M$. We consider a \textit{cost functional} $\h : [0,+\infty) \times \M \to \cR$ and impose an \textit{energy constraint} given by an upper bound $K > 0$ (possibly $K = +\infty$).

The set $\U$ of \textit{strict-sense controls} is defined by
\[
\U := \bigcup_{T > 0} \left( \{T\} \times L^1\big([0,T], \C \big) \right),
\]
where $\C \subseteq \cR^m$ is a closed cone. For any control pair $(T,u) \in \U$, we define a \textit{strict-sense process} as a  4-tuple $(T, u, x, \vz)$, where $(x, \vz)$ solves the Carathéodory system
\begin{equation} \label{E2}
\left\{
\begin{aligned}
\frac{dx}{dt}(t) &= f(x(t)) + \sum_{i=1}^m g_i(x(t)) u^i(t), \\
\frac{d\vz}{dt}(t) &= |u(t)|, \\
(x, \vz)(0) &= (\check{x}, 0),
\end{aligned}
\right.
\end{equation}
with given smooth vector fields $f$, $g_1, \dots, g_m$. A strict-sense process is said to be \textit{feasible} if it satisfies the endpoint condition $(T, x(T)) \in \cS$ and the energy constraint $\vz(T) \le K$. Observe that $\vz(t)$ represents the $L^1$-norm of the control $u$ on the interval $[0,t]$. Let $\mathcal{X}$  and  $\mathcal{X}^\cS$
denote the set of  strict-sense processes and the subset of the feasible ones, respectively. 

The \textit{original optimal control problem} is then posed as:
\begin{equation}\tag{$\mathcal{P}$}\label{P}
\inf \left\{ \h(T, x(T)) : (T,u,x,\vz)\in\mathcal{X}^\cS \right\}.
\end{equation}

Due to the lack of control bounds and absence of growth conditions, minimizing sequences may fail to converge within the space of admissible trajectories. To address this, we consider an \textit{extended formulation} of the problem following the graph-completion approach (see, e.g., \cite{BR, MR, MiRu, GS, KDPS, AKP1, WZ}), 
where $(S, w^0, w)$ lies in the space of \textit{extended controls}
\[
\mathcal{W} := \bigcup_{S>0} \left( \{S\} \times L^\infty([0, S], \CC)\right) .
\]
Here, $\CC$ is the closure of the set
$\CC^+:=\{(w^0,w)\in]0,+\infty[ \times \C: \,  w^0+|w|=1\}$,
	and $(y^0, y, \beta)$ solves the extended control system
\begin{equation} \label{extended}
\left\{
\begin{aligned}
\frac{d y^0}{ds}(s) &= w^0(s), \\
\frac{d y}{ds}(s) &= f(y(s)) w^0(s) + \sum_{i=1}^m g_i(y(s)) w^i(s), \\
\frac{d \beta}{ds}(s) &= |w(s)|, \\
(y^0, y, &\beta)(0) = (0, \check{x}, 0).
\end{aligned}
\right.
\end{equation}
As in the case of strict-sense processes, an  {\em extended process} $(S, w^0, w, y^0, y, \beta)$ is called \textit{feasible} if $(y^0(S), y(S), \beta(S)) \in \cS \times [0,K]$. Let  $X_\mathcal{W}$ be the set of extended processes and write  $X^{\cS}_\mathcal{W}$ for the subset of the feasible ones. Then, the  \textit{extended optimal control problem} is
 \begin{equation}\tag{P{\scriptsize e}}\label{Pe}
\inf \left\{ \h(y^0(S), y(S)) : (S, w^0, w, y^0, y, \beta)\in X^{\cS}_\mathcal{W} \right\}.
\end{equation}
The variable $y^0$ can be interpreted as a reparametrization of time. In this setting, every strict-sense process corresponds to an extended process  with  controls in the subset
\[
\mathcal{W}^+ := \bigcup_{S>0} \left( \{S\} \times L^\infty([0, S], \CC^+)\right) ,
\]
We refer to such processes as \textit{embedded strict-sense processes}:  $X_{\mathcal{W}^+}$ and $X^{\cS}_{\mathcal{W}^+}$ will denote the set of  embedded strict-sense processes and the subset of the feasible ones, respectively. Given a strict-sense process $(T, u, x, \vz)$, one defines the time change $\sigma(t) := t + \vz(t)$, and its inverse $y^0 := \sigma^{-1}$, yielding  
\begin{equation}\label{seq}
(S, w^0, w, y^0, y, \beta) := \left( \sigma(T), \frac{dy^0}{ds}, (u \circ y^0) \cdot \frac{dy^0}{ds}, y^0, (x ,\vz) \circ y^0 \right)
\end{equation}
in $X_{\mathcal{W}^+}$. Conversely, one recovers a strict-sense process from an embedded one by setting $\sigma := (y^0)^{-1}$. This gives a bijective correspondence between strict-sense processes and their embedded counterparts, preserving feasibility and cost values.

As such, the original problem is equivalent to the following embedded formulation:
\begin{equation}\tag{P}\label{Ps}
\inf \left\{ \h(y^0(S), y(S)) : (S, w^0, w, y^0, y, \beta)\in X^{\cS}_{\mathcal{W}^+}\right\}.
\end{equation}
 The \textit{impulsive extension} of the problem is obtained  by permitting $w^0$ to vanish on non-degenerated sub-intervals $I$ of  $[0,S]$. In such cases, the state variable $y$ describes instantaneous changes at time values $t = y^0(I)$ for intervals $I \subseteq [0,S]$ where $w^0 = 0$. (This can also be equivalently described in terms of trajectories of bounded variation, as in \cite{KDPS, AKP1, FM224}).

\medskip

We consider a notion of {\em local} infimum gap with respect to the following $L^1$-control distance:  
\begin{equation}\label{dL^1}
				\d\big(z_1,z_2\big):=    |S_1-S_2|+ \|(w^0_1, w_1)-(w^0_2, w_2)   \|_{L^1[0,S_1\wedge S_2]},
		\end{equation} 
	  $z_i:=(S_i, w^0_i, w_i , y^0_i,y_i,\beta_i)\in X_\mathcal{W}$, $i=1,2$.
		Specifically, 	
		\begin{definition}\label{Dgap} Given   a  feasible extended   process $ \hat z =(\hat S,\hat w^0,\hat w,\hat y^0,\hat y,\hat \beta)$,  we say that  {\em at $\hat z$  there is a local  infimum  gap} if  
		\begin{equation}\label{igc} 
		\h\big((\hat{y}^0,\hat{y})(\hat{S}) \big) <\inf_{\left\{z \in X^{\cS}_{\mathcal{W}^+}: \ \d(z,\hat z)<r\right\} }\h\big((y^0,y)(S)\big)  
		\end{equation} 
for some $r>0$.
\end{definition}

 
\section{Some preliminaries}\label{Sprel} 
\subsection{Lie brackets} 
	If $h$, $k$ are $C^1$ vector fields on a differential manifold $\N$,\footnote{The regularity of the differential manifold $\N$ will be always assumed such that all considered brackets can be classically defined. For simplicity, one can  assume that  $\N$ is of class $C^\infty$.}  the {\it Lie bracket of $h_1$ and $h_2$}, on any chart, is
	$$
	[h,k](x) := Dk(x)\cdot h(x) -  D h(x)\cdot k(x) .
	$$ 
	If the vector fields are sufficiently regular,  one can iterate the bracketing process   and define, for instance, given a $5$-tuple ${\bf h}:=(h_1,h_2,h_3,h_4,h_5)$ of vector fields,  the bracket $[[h_1,h_2],h_3]$.   Accordingly, one can consider the  {\it  formal bracket } $B:=[[X_2,X_3],X_4]$ and set  $B({\bf h}) := [[h_2,h_3],h_4]$. 
	 
	\begin{definition}    Fix $k\in\cN$. If $\mu\geq 0$,  $r\ge1$, and $\nu\geq \mu+r$ are integers,  $B = B(X_{\mu+1},\ldots,X_{\mu+r})$ is an iterated  formal bracket, and   ${\bf h}=(h_1,\ldots,h_\nu )$  is a string of vector fields, we say that {\rm  ${\bf h}$ is of class $C^{B+k}$} if there is a $\nu$-tuple $(j_1,\ldots,j_\nu)\in\cN^\nu$  such that
		$h_i$ is of class $C^{j_i}$ for any $i=1,\ldots,\nu$  and 
		$B({\bf h})$ is a vector field of class $C^k$. In this case, we   call $(B,\mathbf{h})$   an {\em admissible $C^k$ bracket pair.} 
	\end{definition}
	\begin{definition} For every   $k\in\cN$, 
		we  use $\B^k$ to denote the (possibly empty)  set of  
		admissible $C^k$  bracket pairs  $(B,\mathbf{h})$,  such that  $\mathbf{h}:=(h_1,\ldots,h_\nu)$   is a $\nu$-tuple of vector fields  $h_j\in \{g_1,\ldots,g_{m_1}\}$ for every $j=1,\ldots,\nu$.   \footnote{For  more  detailed  definitions we refer to \cite{FeleqiRampazzo2017}.}
	\end{definition} 

\subsection{QDQ approximating cones}
We recall the definitions of {\it Quasi Differential Quotient  } (QDQ) of a set-valued map  and   of {\it QDQ-approximating cone}   introduced in \cite{PR} (see also \cite{MPR}).
In the following, we  set  $\cR_+:=[0,+\infty[$ and $\cR_-:=]-\infty,0]$. A  monotone nondecreasing function   $\rho : \cR_+\to\cR_+$ is  a   {\it modulus}   if 
 	$\ds\lim_{s\to 0^+}\rho(s) =\rho(0)= 0$. For any $\delta>0$ and $\xi\in\cR^N$, $B_\delta(\xi)$  is the open ball of center $\xi$ and radius $\delta$ in $\cR^N$. We use $Lin(\cR^N, \cR^n)$ to denote the set of linear maps from $\cR^N$ to $\cR^n$.
	\begin{definition}
		\label{qdq}
		Let $G : \cR^N \rightsquigarrow \cR^n$     be a 
		set-valued map, $(\bar  \xi,\bar y) \in \cR^N\times\cR^n  $,  let $\Lambda\subset Lin(\cR^N, \cR^n)$  be  a compact set,  and let $\Gamma\subset\cR^N$  be  any  subset.
		We
		say that $\Lambda$ is a {\rm Quasi Differential Quotient  (QDQ) of $G$ at  $(\bar  \xi,\bar y)$  in the direction of $\Gamma$}  if   there
		exist  a modulus $\rho$ and some $\bar\delta>0$ enjoying  the property that,
		for any $\delta\in[0,\bar\delta]$,  there is a continuous map  
		$(L_\delta,h_\delta):   B_{\delta}(\bar{ \xi})\cap\Gamma  \to Lin(\cR^N, \cR^n) \times \cR^n$ 
		such that, for all $ \xi
		\in   B_\delta(\bar{ \xi})\cap \Gamma$,\footnote{Notice that we are not assuming that $\bar \xi\in \Gamma$, so that it may well happen that  $\bar y\notin G(\bar\xi)$.}
		$$
		\begin{array}{l}
		\bar y +  L_\delta ( \xi)\cdot( \xi-\bar  \xi)  + h_\delta( \xi)\in G( \xi), \quad  |h_\delta( \xi)|\leq \delta \rho(\delta), \\
		 \min_{L'\in\Lambda}|L_\delta( \xi) - L'|\leq \rho(\delta). 
		\end{array}
		$$
	\end{definition}
	The notion of QDQ extends  to differential manifolds:
	\begin{definition}
	\label{agdqM}  
		Let $\mathcal{N}$, $\mathcal{M}$ be differential  manifolds of class  $C^1$.  Assume that $\tilde G : \mathcal{N} \rightsquigarrow \M$       is a 
		set-valued map, $(\bar \eps,\bar y) \in \mathcal{N}\times\M  $,  $\tilde\Lambda\subset Lin(T_{\bar\eps}\mathcal{N}, T_{\bar y}\M)$   is a compact set,  and $\tilde\Gamma\subset\mathcal{N}$  is any  subset.
		Moreover, let $\phi:U\to \cR^N$ and $\psi:V\to\cR^n$ be charts defined on  neighborhoods  $U$ and $V$ of $\bar\eps$ and $\bar{y}$, respectively, and assume that $\phi(\bar\eps) = 0$, $\psi(\bar y)=0$. Let $G:\cR^N\to\cR^n$ any extension of the map 
		$\psi\circ \tilde G\circ\phi^{-1}:  \phi(U)\to\cR^n$.
		We say that $\tilde\Lambda$ is   a {\em Quasi Differential Quotient (QDQ) of $\tilde G$ at  $(\bar \eps,\bar y)$  in the direction of $\tilde\Gamma$}  if  $\Lambda:= D\psi(\bar y) \cdot \Lambda\cdot D\phi^{-1}(0)$    is a  QDQ  of $G$ at  $(0,0)$ in the direction of $\Gamma:=\phi(\tilde\Gamma\cap U)$. 
	\end{definition}
	\begin{remark} {\rm By means of the concept of QDQ, introduced in \cite{PR} as a special case of Sussmann's AGDQ \cite{sus}, one obtains  a standard, non-punctured, Open Mapping Theorem. This key property, not shared by general AGDQs, underpins the set separation argument crucial to the proof of Theorem 3.1. For a more in-depth discussion on the properties of QDQs, we refer to \cite{PR}.}
\end{remark}
	\begin{definition}\label{ApprCone}    Let $\M$ be a $C^1$ differential manifold, $\mathcal E\subset\M$   a set.  A  {\em  QDQ approximating cone}  to $\mathcal E$ at $z\in \mathcal E$ is a
		convex cone $\K\subseteq T_{z}\M$ such that there exist  $N\in\cN$, a set-valued
		map  $G : \cR^N  \rightsquigarrow \M$, a convex cone $\Gamma\subset\cR^N$, and a  QDQ  $\Lambda:=\{L\}\subset \text{Lin}(\cR^N,\cR^n)$ of $G$  at  $(\xi,z)$ in the direction of $\Gamma$ such that $G(\xi+\Gamma)\subset \mathcal E$ and $\K =
		L\cdot\Gamma$. 
		We say that   {\em   $(G,\Gamma,\Lambda)$  generates the  cone $\K$}.
		If such triple $(G,\Gamma,\Lambda)$ satisfies
		$G(\xi+\Gamma)\subset \mathcal E\backslash \{z\}$, then we say that {\em the QDQ approximating  cone  $\K$
			is  $z$-ignoring}. 
	\end{definition}
	\begin{remark}{\rm The  classical  Boltyanski approximating cone  is a  special case of QDQ approximating cone (see, e.g.,  \cite{APR25}). Other approximating cones or spaces  may happen to be QDQ approximating  cones provided  some regularity assumptions   are satisfied (see \cite{MPRArXiv} for the case of the Clarke tangent cone).	}
	\end{remark}
%

\section{Main results}
We assume the following hypotheses:
\begin{itemize}
    \item[\rm (i)] The vector fields $f$, $g_1, \dots, g_m$ are of class $C^1$ and uniformly bounded along with their derivatives.
    \item[\rm (ii)] The final cost function $\h : \cR \times \M \to \cR$ is of class $C^1$.
    \item[\rm (iii)] The control set $\C \subseteq \cR^m$ is a closed cone of the form $\C = \C_1 \times \C_2$, with  $m_1, m_2\in\cN$ and $m_1 + m_2 = m$, where:  if $m_1 \ge 1$, the cone $\C_1 \subseteq \cR^{m_1}$ contains all lines $\{ r \mathbf{e}_i : r \in \cR \}$ for $i = 1, \dots, m_1$;\footnote{For any  $i=1,\dots,m$, $\mathbf{e}_i$ is the $i$-th element of the canonical basis of $\cR^m$.} the cone $\C_2 \subset \cR^{m_2}$ contains no lines.\footnote{This assumption is not restrictive up to a linear change of coordinates.}
\end{itemize}
We define the  {\it unmaximized Hamiltonian} $H: T^*\M\times \cR\times\cR\times\cR_+\times\C   \to \cR$ as 
$$
H(x,p,p_0,\pi,w^0,w ):= 
p_0w^0 + p\cdot\Big(f (x) w^0 +  \sum_{i=1}^{m}  g_{i}(x) w^{i}\Big) + \pi | w|,
$$ 
and  introduce  the following notion of higher-order extremal.
\begin{definition}\label{HOExtremal} Take $(\bar S,\bar w^0,\bar w , \bar y^0,\bar  y,\bar\beta)\in X^{\cS}_\mathcal{W}$.
		 Let  $\K$  be a QDQ   approximating cone to  the target $\cS$ at $(\bar y^0(\bar S),\bar y(\bar S))$. 
		We say that  $(\bar S,\bar w^0,\bar w , \bar y^0,\bar  y,\bar\beta)$ is  a {\em higher-order  $\h$-extremal} with respect to $\K$    if  there exist  a lift $(\bar y, p)\in AC([0,\bar S], T^*\M)$ and multipliers   $(p_0,\pi,\lambda)\in \cR\times \cR_-
		\times \cR_+$  such that  conditions {\rm(i)-(vi)} below are valid.
		\begin{itemize}
			\item[{\rm (i)}]  {\sc (non-triviality)} The triple $(p_0, p , \lambda)$ is non trivial, i.e.
			\begin{equation} 
				\label{fe1}
				(p_0, p , \lambda) \not= (0, 0,0) \,.
			\end{equation}
			Furthermore, if 
			$ \bar y^0(\bar S)>0$, then \eqref{fe1} can be strengthened  to 
			\begin{equation}\label{strongfe1}
				(p  , \lambda) \not= (0,0). 
			\end{equation}
			\item[{\rm (ii)}] {\sc (non-tranversality)}  If $J_K:=\{0\}$ for $\bar\beta(\bar S) < K $, and $J_K:=[0,+\infty)$ 
			for $\bar\beta(\bar S) = K$,
			\begin{equation}
				\label{fe4}
				(p_0,p(\bar S),\pi) \in \left[-\lambda D\Psi\big( (\bar y^0(\bar S),\bar y(\bar S))\big)- \K^\perp\right]\times J_K.
			\end{equation}
			 In particular,   
			\begin{equation}\label{piestzero}\pi = 0 \quad\hbox{provided}\quad \bar\beta(\bar S)< K.  \end{equation} 
			\item[{\rm (iii)}] {\sc (Hamiltonian equations)}  The path $ (\bar y, p) $ verifies 
			\begin{equation}
				\label{fe2def}
				\displaystyle  \frac{d}{ds}(\bar y,p) (s)\,=\, {\bf X}_{\bar H}\left(s,\bar y(s), p(s)\right) \quad\text{for a.e. $s\in [0,\bar S]$,}
			\end{equation} 
			where $\bar H=\bar H(s,y,p):= H\big(y,p, p_0,\pi,\bar w^0(s),\bar w(s)\big)$, and ${\bf X}_{\bar H}$ denotes   the ($s$-dependent) Hamiltonian vector field corresponding to ${\bar H}$.\footnote{If $K=K(s,y,p)$ is a differentiable map on the cotangent bundle  $T^*\M$, in any local system of canonical coordinates $({\mathfrak y },{\mathfrak p })$, the Hamiltonian vector field ${\bf X}_{K}$ corresponding to $K$ is defined as   
			 $\ds {\bf X}_{K}(s,y,p) := \left(D_{p}{K}, - D_{y}{K}\right)(s,y,p)$,
			  so that \eqref{fe2def} coincides with the extended system coupled with the  usual adjoint equation.}
			\item[{\em (iv)}] {\sc (First order maximization)} For a.e. $s\in [0,\bar S]$, 
			\begin{equation}\label{fe3}
					H\Big(\bar y(s), p(s),p_0 , \pi,\bar w^0(s),\bar w(s) \Big)= 
				\max_{(w^0,w )\in \CC }  H(\bar y(s), p(s), p_0 ,\pi,w^0,w ).
			\end{equation}
				\item[{\em (v)}] {\sc (Vanishing of the Hamiltonian)} 
			\begin{equation}\label{engine}
			\max_{(w^0,w  )\in \CC  }  H(\bar y(s), p(s), p_0 ,\pi,w^0,w )=0, \quad \text{for all }s\in[0,\bar S],
			 \end{equation}
			  {which, in case $\bar\beta(S)<K$, yields}
			\begin{gather}
				p(s)\cdot g_i(\bar y(s))=0,   \qquad \text{for all } s\in[0,\bar S],\   i=1,\dots,m_1.  \label{pg0hi1}
			\end{gather}
			\item[{\em (vi)}] {\sc (Higher-order conditions)} If  $\bar\beta(S)<K$ and  $(B,\mathbf{h})\in \B^0$,  
			\begin{gather}
				p(s)\cdot  B(\mathbf{h})(\bar y(s))=0,   \quad \text{for all } s\in [0,\bar S]. \label{pg000hi}
			\end{gather}
			Furthermore, if  $(B,\mathbf{h})\in \B^1$,     for  a.e.   $s\in [0,S]$  one has\footnote{ When $m=m_1=1$ and $g=g_1$,  equality \eqref{MP111new} reduces to the Legendre-Clebsch-type  condition 
				$  p(s)\,\cdot\, \big[{f }, g \big](\bar y(s))\,\bar w^0(s) =0$.
			 } 
			\begin{equation}\label{MP111new}
			p(s)\,\cdot\, \bigg(\big[{f  }, B(\mathbf{h})\big](\bar y(s))\,\bar w^0(s) + \ds\sum_{j=m_1+1}^{m}
			\big[g_j,B(\mathbf{h})\big](\bar y(s))\,\bar w^j(s)\bigg) =0.
			 \end{equation} 
		\end{itemize}
	\end{definition}

	\begin{definition}\label{HONormal} 
		Let $(\bar S,\bar w^0,\bar w , \bar\alpha,  \bar y^0,\bar  y,\bar\beta) \in X^{\cS}_\mathcal{W}$ be a higher-order $\h$-extremal with respect to a QDQ approximating cone $\K$ to $\cS$ at the point $(\bar y^0(\bar S), \bar y(\bar S))$. 
We say that this extremal is \emph{normal}
  if, for every choice of multipliers $(p_0, p, \pi, \lambda)$, it holds that $\lambda \ne 0$. Otherwise, it is called an \emph{abnormal higher-order extremal with respect to $\K$}.\footnote{The fact that a higher-order   extremal, as well as  a classical  extremal, is abnormal (with  respect to  some $\K$) does not depend on the cost function $\h$.}
	\end{definition}
	 \begin{remark}{\rm The notion of extremality here depends on the QDQ approximating cone 
$\K$. This concept is slightly more general than the one used in the higher-order Maximum Principle established in \cite{AMRCDC,AMR20}. In particular, we consider a state 
$y$
 evolving on a Riemannian manifold, rather than in a Euclidean space, and we work with QDQ approximating cones, which generalize the Boltyanski approximating cones  utilized in \cite{AMR20}.}\end{remark}
 
 \begin{theo}[Gap and higher-order abnormality]\label{ThIsolated} Let  $(\bar S, \bar w^0, \bar w, \bar y^0, \bar y, \bar\beta)$ be a feasible extended process at which a local infimum gap occurs. Then, for any QDQ approximating cone $\K$ to the target  $\cS$ at  $(\bar y^0(\bar S), \bar y(\bar S))$, the process $(\bar S, \bar w^0, \bar w, \bar y^0, \bar y, \bar\beta)$ is an abnormal higher-order extremal with respect to $\K$.

	\end{theo}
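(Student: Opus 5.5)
The plan is a set-separation argument in the spirit of \cite{MPR}, adjusted so that every variation of $\hat z:=(\bar S,\bar w^0,\bar w,\bar y^0,\bar y,\bar\beta)$ stays within $L^1$-control distance $r$. Write $\bar\xi:=(\bar y^0(\bar S),\bar y(\bar S),\bar\beta(\bar S))$.

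\textbf{Step 1 (reachable set).} I would introduce the set-valued map $G$ that sends a finite-dimensional parameter $\xi=(\xi_1,\dots,\xi_N)$, with $\xi\in\Gamma$ (a convex cone, e.g. $\cR_+^{N}$ times a linear subspace), to the endpoints $(y^0(S),y(S),\beta(S))$ of processes with controls in $\W^+$. These processes are built from $\hat z$ by three kinds of perturbation:
\begin{itemize}
\item needle-type variations at Lebesgue points, replacing $(\bar w^0,\bar w)$ by a constant $(w^0,w)\in\CC$ on an interval of length $\xi_j$;
\item a variation of the final parameter $S$;
\item higher-order variations for each $(B,\mathbf h)\in\B^0$ and $\B^1$, obtained by inserting at a time $s_j$ a short concatenation of controls in the directions $\pm \mathbf e_i$, $i\le m_1$. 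Their net effect is a displacement of order $\xi_j$ along $\pm B(\mathbf h)(\bar y(s_j))$, transported to $\bar S$ by the linearized flow.
\end{itemize}
The last kind is only admissible when $\bar\beta(\bar S)<K$, since it consumes energy, and hence the restriction in (vi).

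I would also replace $(\bar w^0,\bar w)$ by a nearby control in $\W^+$, i.e. with $w^0>0$. This brings the processes into the embedded strict-sense class, with controls in $\W^+$ as required by \eqref{igc}.

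\textbf{Step 2 (QDQ).} Using Definition \ref{qdq} and the results of \cite{PR}, I would show that $\{L\}$ is a QDQ of $G$ at $(0,\bar\xi)$ in the direction of $\Gamma$. Here $L$ is the linear map collecting the transported first-order and bracket directions. I would then add a cost component $\h$, so that the resulting cone $L\cdot\Gamma$ is a QDQ approximating cone to the augmented reachable set.

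\textbf{Step 3 (separation).} The local gap \eqref{igc} says that the augmented reachable set, intersected with the points of cost $<\h(\bar y^0(\bar S),\bar y(\bar S))$ and with $\cS\times[0,K]$, is empty near $\bar\xi$. It is empty only when restricted to processes $z$ with $\d(z,\hat z)<r$, and this is why Step 1 must produce only such processes. Given this, the open mapping property of QDQs together with the non-punctured set separation theorem of \cite{PR} yields that the cones $L\cdot\Gamma$ and $\K\times\cR_-$ (cost direction) $\times J$ are not transversal. This gives a nontrivial separating covector $(p_0,p(\bar S),\pi,\lambda)$.

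The key point is that the strict decrease in \eqref{igc} is used to show that the separation still holds when the cost is perturbed downward. Since we may take any direction with negative cost increment, we can force $\lambda=0$, i.e. abnormality. The mechanism is that the gap gives a whole neighbourhood in the cost variable; equivalently, one separates the reachable set from $\K$ alone, without the cost. Propagating $p$ backward by the adjoint equation \eqref{fe2def} then gives (iii). The sign conditions on the needle, time and bracket directions give (iv), (v) (including \eqref{pg0hi1}, from the lines $\pm\mathbf e_i\subset\C_1$) and (vi). Non-triviality \eqref{strongfe1} when $\bar y^0(\bar S)>0$ comes from the time variation.

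\textbf{Main obstacle.} The hard step is the $L^1$-closeness in Step 1; this is where Lemmas \ref{L0}, \ref{L1} and \ref{L2} enter.
\begin{itemize}
\item \emph{Needle variations} change the control by $O(|\xi|)$ in $L^1$, so they are harmless.
\item \emph{Approximating $\bar w^0$ by a positive $w^0$} is also harmless: on impulsive intervals where $\bar w^0=0$, I would use $w^0=\eta$ and rescale, which costs only $O(\eta\bar S)$ in $L^1$.
\item \emph{Bracket variations} are the delicate case. Generating $B(\mathbf h)$ at order $\xi_j$ requires controls of size roughly $\xi_j^{1/\deg B}$ on intervals of comparable length. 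I would first check that the $L^1$ norm of these insertions, $\sum(\text{amplitude}\times\text{length})$, still tends to $0$ as $\xi\to0$. This uses only boundedness of the $\CC$-valued controls, since $w^0+|w|=1$. I expect it to work because in the reparametrized extended formulation the controls are bounded by $1$, so any insertion of total $s$-length $\ell$ costs at most $2\ell$ in $L^1$.
\item \emph{Continuity in $\xi$.} The remaining work is to show that the $L^1$ distance is uniformly small on $B_\delta(0)\cap\Gamma$ and that the parametrization is continuous in $\xi$. This is the content needed for the QDQ estimate of Step 2.
\end{itemize}
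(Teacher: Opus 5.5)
\textbf{Verdict: genuine gap.} Your architecture agrees with the paper's, which after Lemmas \ref{L0}--\ref{L2} defers to \cite{MPR}: reduce to $\bar\beta(\bar S)<K$, build a QDQ approximating cone to the strict-sense reachable set from needle, time and bracket-like variations, and separate it from the target alone to get $\lambda=0$. Step 3 is right in substance, since the gap leaves a whole neighbourhood in the cost variable. You should add that it is Lemma \ref{L0} that places all endpoints from a small $\tilde\d$-ball near $\bar\xi$, where $\h$ is close to its reference value.

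\textbf{Where it fails.} The gap is at the one point the paper identifies as new: keeping the variations inside an $L^1$-control ball. Your justification there is incorrect.
\begin{itemize}
\item A bracket-like loop of $s$-length $\ell\sim\xi_j^{1/\deg B}$ cannot simply overwrite $(\bar w^0,\bar w)$ on an interval. Doing so adds a first-order endpoint displacement of order $\ell$ (essentially $-\ell$ times the reference velocity at $s_j$, transported to $\bar S$), which in general swamps the $O(\xi_j)$ bracket term.
\item So the loop must be accommodated by displacing or reparametrizing the rest of the reference control. Then the distance to $(\bar w^0,\bar w)$ is not bounded by $2\ell$: it contains terms like $\int|\bar w\circ\sigma-\bar w|\,ds$, with $\sigma$ a time change close to the identity.
\item Trajectories are Lipschitz, so $\|\bar y\circ\sigma-\bar y\|_\infty\le C\|\sigma-\mathrm{id}\|_\infty$. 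For a merely $L^\infty$ control, however, such integrals tend to zero with no rate. This is exactly why the control distance behaves badly under the reparametrizations used in \cite{MPR}.
\item The same objection applies to your ``$w^0=\eta$ and rescale'' if rescaling means passing to the canonical parametrization. Rescaling the values instead, e.g.\ $(\eta+(1-\eta)\bar w^0,(1-\eta)\bar w)$, is harmless.
\end{itemize}

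\textbf{How the paper handles it.} Via rate-independence, the paper works with a rescaled, non-canonical, fixed-final-time representative of $\bar z$, with $R_1\le w^0+|w|\le R_2$. On this representative the variations of \cite{MPR} converge to the reference in $\tilde\d$. Two lemmas make this legitimate.
\begin{itemize}
\item Lemma \ref{L1}: a gap w.r.t.\ $\tilde\d$ is invariant under bi-Lipschitz reparametrization, using $L_1\le d\sigma/ds\le L_2$.
\item Lemma \ref{L2}: a gap among canonical processes in $\d$ is also a gap among all, possibly non-canonical, strict-sense processes in $\tilde\d$. Its nontrivial direction controls $\int|\bar w\circ\sigma_j-\bar w|$ by approximating $\bar w$ with continuous functions.
\end{itemize}
You cite these lemmas, but no step of your argument actually uses them, and the estimate you put in their place is false.

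\textbf{Alternative repair.} If you prefer to stay in the canonical class with shifted tails, you need continuity of translations in $L^1$ instead. That gives smallness without a rate, which is enough for membership in the $\d$-ball. You must then also drop the $\beta$-component of the endpoint map, because the loop changes $\beta(S)$ by $O(\ell)$ rather than $O(\xi)$. Dropping it is allowed, since $\bar\beta(\bar S)<K$ and, by Lemma \ref{L0}, $\beta(S)\le\bar\beta(\bar S)+\tilde\d(z,\bar z)$.
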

	This result was originally established in \cite{MPR} for extended processes exhibiting a local infimum gap with respect to the $L^\infty$-distance between trajectories. However, extending the proof of \cite[Theorem 1]{MPR} to the setting involving the control distance $\d$ is far from straightforward. It involves nontrivial modifications and a careful re-examination of key arguments. A detailed discussion of this adaptation is deferred to the next section.

	As a straightforward consequence of Theorem \ref{ThIsolated}, we deduce the following sufficient condition for the absence of  a local infimum gap: 
	\begin{theo}[Higher-order normality and no-gap]\label{Thnormality} Let
		$\hat z:=(\hat S,\hat w^0,\hat w ,  \hat y^0,\hat  y,\hat\beta)$ be a   feasible extended   process  which satisfies, for some $r>0$,
		$$
		\h\big((\hat{y}^0,\hat{y})(\hat{S}) \big) \le \inf_{\left\{z=(S,w^0, w ,  y^0,  y,\beta) \in X^{\cS}_{\mathcal{W}^+}: \ \d(z,\hat z)<r\right\} }\h\big((y^0,y)(S)\big).  
		$$
		If $(\hat S,\hat w^0,\hat w ,  \hat y^0,\hat  y,\hat\beta)$ is  a normal higher-order  $\h$-extremal
		for some QDQ approximating cone $\K$ to   $\cS$ at $(\hat y^0(\hat S),\hat y(\hat S))$, then 
		at $(\hat S,\hat w^0,\hat w ,  \hat y^0,\hat  y,\hat\beta)$ there is no local infimum gap.  \end{theo}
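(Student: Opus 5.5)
We argue by contradiction, using Theorem \ref{ThIsolated} as the only substantial ingredient. Assume that at $\hat z$ there is a local infimum gap in the sense of Definition \ref{Dgap}. Then \eqref{igc} holds for some radius $r'>0$. Theorem \ref{ThIsolated} now applies to $\hat z$, which is feasible by hypothesis. It says that for \emph{every} QDQ approximating cone $\K$ to $\cS$ at $(\hat y^0(\hat S),\hat y(\hat S))$, the process $\hat z$ is an abnormal higher-order extremal with respect to $\K$.

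In particular, take the cone $\K$ for which $\hat z$ is assumed to be a normal higher-order $\h$-extremal. By Definition \ref{HONormal}, abnormality with respect to $\K$ means there is a choice of multipliers $(p_0,p,\pi,\lambda)$ satisfying (i)--(vi) of Definition \ref{HOExtremal} with $\lambda=0$. Normality requires $\lambda\neq0$ for every admissible choice of multipliers. These two statements are incompatible. Footnote to Definition \ref{HONormal} is what makes this clean: abnormality does not depend on $\h$, so the abnormal multipliers produced by Theorem \ref{ThIsolated} are admissible multipliers for the $\h$-extremal structure too. This contradiction shows that no local infimum gap can occur at $\hat z$.

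The hypothesis $\h((\hat y^0,\hat y)(\hat S))\le\inf_{\d(z,\hat z)<r}\h$ plays no role in the contradiction itself. It only records that $\hat z$ is a local extended minimizer, so that the alternative to a gap is the equality $\h((\hat y^0,\hat y)(\hat S))=\inf_{\d(z,\hat z)<r}\h$. This equality follows once we know that \eqref{igc} fails for every radius, in particular for $r$.

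The only point that needs care is definitional. We must check that "normal" in Theorem \ref{Thnormality} and "abnormal" in Theorem \ref{ThIsolated} refer to the same cone $\K$ and the same set of conditions (i)--(vi), so that they are genuine negations of one another. This holds, because Theorem \ref{ThIsolated} is stated for any QDQ approximating cone at that endpoint. All the analytic difficulty, namely controlling the $L^1$-distance $\d$ between the approximating strict-sense controls and $\hat w$, is contained in Theorem \ref{ThIsolated}, which we take as given.
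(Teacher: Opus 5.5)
Your proposal is correct and follows the paper's route: Theorem \ref{Thnormality} is obtained there as a direct (contrapositive) consequence of Theorem \ref{ThIsolated}, applied with the same cone $\K$ for which normality is assumed. Your remark that the local-minimality hypothesis plays no role in the contradiction, and only turns ``no gap'' into equality with the local infimum, is also accurate.
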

		
		\section{Proof of Theorem \ref{ThIsolated}}
		\subsection{Rate independence and set separation}
		Since the structure of the proof closely follows that of \cite[Theorem 3.1]{MPR} --with the main difference being that we now use the topology induced by the control distance~$\d$  rather than the $L^\infty$-distance on trajectories-- we  focus on highlighting the necessary adaptations at the steps where this distinction becomes relevant. The proof of Theorem  \ref{ThIsolated}  is based on two key elements:  
\begin{itemize}
    \item[(i)] the \emph{rate-independence} of the extended control system, which allows us to treat the reference extended process as  an extended process at which there is a local infimum gap with respect to a suitably rescaled extended system with fixed final time; and
    \item[(ii)] a \emph{set-separation argument}, which ensures that the reachable set (via strict-sense trajectories) can be approximated by appropriate ``higher-order'' QDQ approximating cones of the extended reachable set. As noted in  \cite[Subsec. 4.2.5]{MPR}, this is not   straightforward, since the extended reachable set may be significantly larger than the strict-sense one.
\end{itemize}

Let $(\bar S,\bar w^0,\bar w , \bar\alpha, \bar y^0,\bar  y,\bar\beta)\in X^{\cS}_{\mathcal{W}}$ be a feasible extended  process  at which there is a local infimum gap. In case $\bar \beta(\bar S)=K$, the statement  of Theorem \ref{ThIsolated} reduces to the first order conditions  (i)--(v), which follow from   \cite[Theorem 5.2]{PR}. Hence, from now on we suppose $\bar \beta(\bar S)<K$.   

We enlarge the set of extended   processes considered up to now by introducing the larger  set of extended  controls  $\tilde\W\supset\W$,   defined as
	\vsm
	\noindent$ 
	\ds\tilde\W:=
	\bigcup_{S>0}\left( \{S\}\times \Big\{(w^0,w)\in L^\infty ([0,S], \cR_+\times \C): \ \   {\text{ess}\inf} (w^0+|w|)>0\Big\}\right),
	$  
	
	\noindent and the subset $\tilde\W_+:=\{(S,w^0,w)\in\tilde \W: \ w^0>0 \ \text{a.e.}\}\subset \tilde\W$.  Let   
	$X_{\tilde\W}$, $X_{\tilde\W_+}$   denote the set of extended processes $(S,w^0,w, y^0,y,\beta)$, where $(S,w^0,w)\in \tilde\W$ and  $(S,w^0,w)\in \tilde\W_+$, respectively,  while $(y^0,y,\beta)$ is the corresponding solution on $[0,S]$ of  \eqref{extended}.   
 Throughout this section, we will refer to the elements of $X_{\tilde\W}$ as   {\em  extended  processes}, while any element  of the subset  $X_{\tilde\W_+}$ will be called  an {\em embedded strict sense process}. Finally,  we will say that  $(S,w^0,w, y^0,y,\beta)\in X_{\tilde\W}$ is {\em canonical} when $(S,w^0,w)\in\W$, i.e. $w^0+|w|=1$ almost everywhere. With this convention, all the extended  processes considered so far were canonical. 
	
	By  {\em rate-independence} of the extended control system  \eqref{extended} we mean that,
	given any strictly increasing, surjective,  bi-Lipschitz continuous   function $\sigma:[0, S]\to [0, \tilde S]$,  an element  $(\tilde S,\tilde w^0,\tilde w,\tilde y^0,\tilde y,\tilde \beta)$ is an extended process  if  and only if   $(S,w^0,w, y^0,y,\beta)$  given by
	$$
	(w^0,w):=\Big((\tilde w^0,\tilde w)\circ\sigma\Big)\,\frac{d\sigma}{ds}, \quad (y^0,y,\beta):=\left(\tilde y^0,\tilde y,\tilde \beta\right)\circ\sigma
	$$
	is an extended process  (see \cite[Sect. 3]{MR}). If we  call {\it equivalent}  and write
	$
	(\tilde S,\tilde w^0,\tilde w,\tilde y^0,\tilde y,\tilde \beta)  \sim (S,w^0,w,y^0,y,\beta),$
	any two extended  processes as above,\footnote{Actually, $\sim$ is an equivalence relation on the set $X_{\tilde\W}$} we can single out a special representative in any $\sim$equivalence class:
	\begin{definition} Given   $(S,w^0,w, y^0,y,\beta)\in X_{\tilde\W}$, we  set  $[0,S]\ni s\mapsto \sigma(s):=y^0(s)+\beta(s)$,  and define  the  {\rm canonical parameterization} of  $(S,w^0,w, y^0,y,\beta)$,  as
		
		$(S_c,w_c^0,w_c , y_c^0,y_c,\beta_c )
		:=\bigg(\sigma(S),((w^0, w)\circ\sigma^{-1})\cdot \frac{\ d\sigma^{-1}}{ds},(y^0 ,y,  \beta)\circ \sigma^{-1}\bigg). 
		$
	\end{definition}
	Notably,   $w^{0}_c   (s)+\left|w_c (s)\right|=1$ for a.e. $s\in[0,S_c]$, so   $(S_c,w_c^0,w_c , y_c^0,y_c,\beta_c )\in X_{\W}$, i.e. it is a canonical extended  process. One can  easily verify that an extended   process is canonical if and only if it  coincides with its canonical parameterization.

	By considering the enlarged set $\tilde\W$ of extended  controls,  the original control system \eqref{E2} can be    embedded  into the extended system \eqref{extended}, when the latter is thought as defined on the $\sim$quotient space, and the set of strict-sense processes can be identified with  $X_{\tilde\W_+}$.  Precisely,  any strict sense process $(T,u, x,\vz)$ is in one-to-one correspondence with the $\sim$equivalence class $[(S,w^0,w, y^0,y,\beta)]$,  
	where  $(S,w^0,w, y^0,y,\beta)$ is the  canonical embedded strict sense process defined in \eqref{seq}.  Observe that any extended process  $(\tilde S,\tilde w^0,\tilde w ,\tilde y^0,\tilde y,\tilde \beta)$ that is $\sim$equivalent to this  $(S,w^0,w,y^0,y,\beta)$ satisfies $\tilde w^0 > 0$ a.e.. Consequently, it belongs to $X_{\tilde\W_+}$. Moreover, an extended  process is feasible if and only if every equivalent process is feasible, and all equivalent processes yield the same cost.
	 
A crucial step in the proof of Theorem \ref{ThIsolated} consists in showing that, both in the formulation of the extended optimization problem and in the definition of the local infimum gap, we are free to consider --without affecting the validity of the results-- any of the following classes of extended processes:

i) the set of all extended  processes;

ii) the subclass of canonical extended processes (as adopted in the previous sections); or even

iii) any subclass of extended processes $(S,w^0,w,y^0,y,\beta)$ satisfying $R_1 \le w^0 + |w| \le R_2$ almost everywhere, for some constants $0 < R_1 < R_2$.

\vsm	
More in detail, 
 for any pair of extended processes $z_i:=(S_i,w^0_i,w_i,y^0_i,y_i,\beta_i)\in X_{\tilde\W}$,  $i=1,2$, let us introduce the control distance
 $$
 \tilde\d(z_1,z_2):=|S_1-S_2|+\int_0^{S_1\vee S_2}[|w^0_1(s)-w^0_2(s)|+|w_1(s)-w_2(s)|]\,ds,
 $$
where  for any process $(S,w^0,w, y^0,y,\beta)\in X_{\tilde\W}$ we tacitly  assume that $(w^0,w)$ and $( y^0,y,\beta)$ are extended to $\rr_+$ by setting $(w^0,w)(s)=(0,0)$ and $( y^0,y,\beta)(s)=( y^0,y,\beta)(S)$ for all $s\ge S$. It is immediate to see that, if $z_1$, $z_2$ are  canonical processes, then the distances $\d$ and $\tilde\d$ are equivalent, since $\d(z_1,z_2)\le  \tilde\d(z_1,z_2)\le 2\d(z_1,z_2)$.
 
\subsection{Three technical Lemmas}
  \begin{lemma}\label{L0} There exist some $M$, $L>0$ such that, given an extended process  $\bar z:=(\bar S,\bar w^0,\bar w ,  \bar y^0,\bar  y,\bar\beta)\in X_{\tilde\W}$,  for any  $z:=(S,w^0,w,y^0,y,\beta)\in X_{\tilde\W}$  
we have
\begin{equation}\label{est_tilded}
  \begin{array}{l}
  \|y^0-\bar y^0\|_{L^\infty(\rr_+)}\le\int_0^{S\vee \bar S}|w^0 (s)-\bar w^0(s)|\,ds\le  \tilde\d(z,\bar z),  \\[1.0ex]
  \|y -\bar y \|_{L^\infty(\rr_+)}\le Me^{L\bar R}\tilde\d(z,\bar z), \\[1.0ex]
   \|\beta-\bar\beta\|_{L^\infty(\rr_+)}\le\int_0^{S\vee \bar S}|w  (s)-\bar w (s)|\,ds\le  \tilde\d(z,\bar z),
   \end{array}
 \end{equation}
 where   $\bar R:=\bar y^0(\bar S)+\bar\beta(\bar S)=\|\bar w^0\|_{L^1(0,\bar S)}+\|\bar w\|_{L^1(0,\bar S)}$.  
  \end{lemma}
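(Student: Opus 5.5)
The plan is to integrate each equation of \eqref{extended} and compare the two solutions on $\rr_+$, using the convention that controls vanish and states are frozen after the final times. Both $z$ and $\bar z$ then solve \eqref{extended} on all of $\rr_+$ with controls extended by zero. Set $S^*:=S\vee\bar S$.

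For $y^0$ and $\beta$ the estimates are immediate. For every $s\ge0$ we have $y^0(s)-\bar y^0(s)=\int_0^{s}(w^0-\bar w^0)\,d\sigma$, whose absolute value is at most $\int_0^{S^*}|w^0-\bar w^0|\,d\sigma$. Similarly,
$$|\beta(s)-\bar\beta(s)|\le\int_0^s\big||w|-|\bar w|\big|\,d\sigma\le\int_0^{S^*}|w-\bar w|\,d\sigma,$$
by the reverse triangle inequality. Both integrals are bounded by $\tilde\d(z,\bar z)$ directly from its definition.

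For $y$ we use a Gronwall argument. I work in coordinates (or via the Riemannian distance, locally), with $M_0$ a common bound for $|f|,|g_i|$ and $L$ a common Lipschitz constant, both available by hypothesis (i). Writing $G(y,w^0,w):=f(y)w^0+\sum_i g_i(y)w^i$, I split
$$G(y,w^0,w)-G(\bar y,\bar w^0,\bar w)=\big[G(y,w^0,w)-G(y,\bar w^0,\bar w)\big]+\big[G(y,\bar w^0,\bar w)-G(\bar y,\bar w^0,\bar w)\big].$$
The first bracket is bounded by $M_0\sqrt m\,(|w^0-\bar w^0|+|w-\bar w|)$. The second is bounded by $L(\bar w^0+|\bar w|\sqrt m)\,|y-\bar y|$. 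The key choice is to place the Lipschitz factor on the \emph{reference} controls, so that the exponent involves only $\bar R$ and not the possibly large $L^1$ norm of $w$.

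Integrating gives
$$|y(s)-\bar y(s)|\le M_0\sqrt m\,\tilde\d(z,\bar z)+\int_0^s L\sqrt m\,(\bar w^0+|\bar w|)\,|y-\bar y|\,d\sigma.$$
Gronwall's inequality then yields $|y(s)-\bar y(s)|\le M_0\sqrt m\,e^{L\sqrt m\,\bar R}\,\tilde\d(z,\bar z)$, since $\int_0^\infty(\bar w^0+|\bar w|)=\bar R$ (recall $\bar w^0\ge0$). Renaming constants, $M:=M_0\sqrt m$ and $L:=L\sqrt m$, gives \eqref{est_tilded}. The identity $\bar R=\bar y^0(\bar S)+\bar\beta(\bar S)$ follows by integrating the first and third equations of \eqref{extended}.

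The only delicate point is the manifold setting. Lipschitz estimates in a single chart are not global. I would handle this either by embedding $\M$ isometrically in some $\cR^N$ (Nash) and extending $f,g_i$ to bounded $C^1$ fields, or by working with the Riemannian distance $d(y,\bar y)$, which is absolutely continuous along the pair of curves, and applying the same Gronwall argument to it. In either case $M$ and $L$ depend only on the bounds in hypothesis (i), not on $\bar z$ or $z$, as the statement requires.
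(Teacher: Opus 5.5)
Your proposal is correct and follows essentially the paper's argument: the $y^0$ and $\beta$ bounds come directly from integrating the control differences, and for $y$ you split the velocity difference so that the Lipschitz constant multiplies the reference controls $\bar w^0+|\bar w|$, which is what lets Gronwall produce the factor $e^{L\bar R}$. The differences are cosmetic. Extending the controls by zero absorbs the tail $\int_{\bar S}^{S}(w^0+|w|)\,ds$ into $\int_0^{S\vee\bar S}$, so you avoid the paper's separate cases $S>\bar S$ and $S<\bar S$. Your remark on the manifold setting addresses a point the paper skips by computing as in $\cR^n$.
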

  \begin{proof}  The estimates of $ \|y^0-\bar y^0\|_{L^\infty(\rr_+)}$ and   $\|\beta-\bar\beta\|_{L^\infty(\rr_+)}$ in \eqref{est_tilded} are immediate. 
If $S>\bar S$, for any $s\ge\bar S$, by standard calculations we have 
$$
  \begin{array}{l}
 \ds|y(s) -\bar y(s)|\le \int_0^{\bar S} |f(y(s')w^0 (s')-f(\bar y(s')\bar w^0(s')|\,ds' +\int_{\bar S}^{s}|f(y(s')w^0(s')|\,ds' \\
 \ds  \quad\quad\qquad+ \sum_{i=1}^m \left[\int_0^{\bar S}|g_i(y(s')w^i (s')-g_i(\bar y(s')\bar w^i(s')|\,ds'  
 +\int_{\bar S}^{s}|g_i(y(s')w^i(s')|\,ds'\right]  \\
 \ds \quad\quad\qquad\le  M \int_0^{\bar S}\left[|w^0(s')-\bar w^0(s')|+|w(s')-\bar w(s')|\right]\,ds'\\
\ds  \quad\quad\qquad+L\int_0^{s}|y(s')-\bar y(s')|(\bar w^0(s')+|\bar w(s')|)\,ds'  +
 M  \int_{\bar S}^{S}\left[w^0(s')|+ |w(s')|\right]\,ds', \end{array}
  $$
  where $M$ and $L>0$ depend on the bounds and on the Lipschitz constants of the vector fields $f$, $g_1, \dots,g_m$, respectively. By applying Gronwall Lemma we  obtain
  $$
  \|y -\bar y \|_{L^\infty(\rr_+)}\le Me^{L\bar R}\tilde\d(z,\bar z),
  $$ 
  The proof in case $S<\bar S$ is analogous. 
 \end{proof} 
   
In what follows, we denote by $X^{\cS}_{\tilde\W}$ and $X^{\cS}_{\tilde\W_+}$ the subsets of feasible extended processes within $X_{\tilde\W}$ and $X_{\tilde\W_+}$, respectively. \begin{definition}\label{gapcan} We say that at a process  $\bar z:=(\bar S,\bar w^0,\bar w ,  \bar y^0,\bar  y,\bar\beta)\in X^{\cS}_{\tilde\W}$ {\em there is a local infimum gap  with respect to  $(X^{\cS}_{\tilde\W_+}, \tilde\d)$,} if there exists some $r>0$ such that
$$
\h(\bar y^0(\bar S),\bar  y(\bar S))<\inf_{\left\{z=(S,w^0, w ,   y^0,  y,\beta) \in X^{\cS}_{\tilde\W_+}: \ \tilde\d(z,\bar z)<r\right\}}\h(y^0(S),  y(S)).
$$
\end{definition}
   To avoid confusion, when in  Definition \ref{Dgap} $\hat z$ is a canonical extended process with a local infimum gap, we may say that  the gap is \textit{with respect to} $(X^\cS_{\W_+}, \d)$.

\begin{lemma}\label{L1}  At some $(\bar S,\bar w^0,\bar w ,  \bar y^0,\bar  y,\bar\beta)\in X^{\cS}_{\tilde\W}$   there is a local infimum  gap   with respect to  $(X^{\cS}_{\tilde\W_+}, \tilde\d)$, 
			if and only if at every process
			$(\tilde S,\tilde w^0, \tilde w ,  \tilde y^0,  \tilde y,\tilde \beta)\sim(\bar S,\bar w^0,\bar w ,   \bar y^0,\bar  y,\bar\beta)$   there is a  local infimum  gap  with respect to  $(X^{\cS}_{\tilde\W_+}, \tilde\d)$.
\end{lemma}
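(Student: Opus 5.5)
The plan is to use that $\sim$ is an equivalence relation and that equivalent processes share feasibility and cost. By symmetry of $\sim$ it suffices to prove one implication: if at $\bar z=(\bar S,\bar w^0,\bar w,\bar y^0,\bar y,\bar\beta)$ there is a local infimum gap with respect to $(X^{\cS}_{\tilde\W_+},\tilde\d)$ with radius $r>0$, and $\tilde z\sim\bar z$, then at $\tilde z$ there is a gap with some radius $r'>0$. The converse then follows by exchanging the roles of $\bar z$ and $\tilde z$, and the ``every'' direction trivially contains $\bar z$ itself.

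\emph{Setup and extension of the time change.} Fix the strictly increasing, surjective, bi-Lipschitz map $\sigma:[0,\bar S]\to[0,\tilde S]$ realizing $\tilde z\sim\bar z$. Thus $(\bar w^0,\bar w)=((\tilde w^0,\tilde w)\circ\sigma)\,\sigma'$ and $(\bar y^0,\bar y,\bar\beta)=(\tilde y^0,\tilde y,\tilde\beta)\circ\sigma$. Extend $\sigma$ to $\rr_+$ by $\sigma(s):=\tilde S+s-\bar S$ for $s\ge\bar S$. The extension is still strictly increasing and bi-Lipschitz. Let $\ell:=\max\{1,\mathrm{Lip}(\sigma),\mathrm{Lip}(\sigma^{-1})\}$. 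Because of the zero-extension convention for controls beyond the final time, the identity $(\bar w^0,\bar w)(s)=((\tilde w^0,\tilde w)\circ\sigma)(s)\,\sigma'(s)$ now holds for a.e.\ $s\in\rr_+$, both sides vanishing for $s>\bar S$.

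\emph{Transport of competitors.} Given any $z=(S,w^0,w,y^0,y,\beta)\in X^{\cS}_{\tilde\W_+}$ with $\tilde\d(z,\tilde z)<r':=r/\ell$, define $z'$ on $[0,S']$, where $S':=\sigma^{-1}(S)$, by
\[
(w'^0,w')=((w^0,w)\circ\sigma)\,\sigma',\qquad (y'^0,y',\beta')=(y^0,y,\beta)\circ\sigma .
\]
The restriction $\sigma:[0,S']\to[0,S]$ is bi-Lipschitz, so rate independence gives that $z'$ is an extended process with $z'\sim z$. We then check three properties of $z'$:
\begin{itemize}
\item[(a)] Since $\sigma'\ge 1/\ell$ a.e., we have $\mathrm{ess\,inf}(w'^0+|w'|)>0$ and $w'^0>0$ a.e. Hence $z'\in X_{\tilde\W_+}$.
\item[(b)] The endpoint is unchanged, $(y'^0,y',\beta')(S')=(y^0,y,\beta)(S)$. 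So $z'$ is feasible and has the same cost as $z$.
\item[(c)] Using the change of variables $t=\sigma(s)$ and the identity above for $\bar w$,
\[
\int_0^{S'\vee\bar S}\big|(w'^0,w')-(\bar w^0,\bar w)\big|\,ds=\int_0^{\sigma(S'\vee \bar S)}\big|(w^0,w)-(\tilde w^0,\tilde w)\big|(t)\,dt .
\]
Since $\sigma(S'\vee\bar S)=S\vee\tilde S$, this equals the integral term of $\tilde\d(z,\tilde z)$. Moreover $|S'-\bar S|=|\sigma^{-1}(S)-\sigma^{-1}(\tilde S)|\le\ell|S-\tilde S|$. Therefore $\tilde\d(z',\bar z)\le\ell\,\tilde\d(z,\tilde z)<r$.
\end{itemize}

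\emph{Conclusion.} By (a)--(c), every competitor in the $r'$-ball around $\tilde z$ has the same cost as some competitor in the $r$-ball around $\bar z$. Hence
\[
\inf_{\tilde\d(z,\tilde z)<r'}\h\ \ge\ \inf_{\tilde\d(z',\bar z)<r}\h\ >\ \h(\bar y^0(\bar S),\bar y(\bar S))=\h(\tilde y^0(\tilde S),\tilde y(\tilde S)),
\]
which is the gap at $\tilde z$.

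The only delicate step is (c). It requires extending $\sigma$ past $\bar S$ consistently with the zero-extension convention in $\tilde\d$, so that the change of variables covers the whole interval $[0,S'\vee\bar S]$ whether $S<\tilde S$ or $S>\tilde S$. The linear extension with slope $1$ handles both cases, since beyond the respective final times both integrands vanish.
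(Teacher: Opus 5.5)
Your proof is correct and is essentially the paper's argument. You use the same transport of competitors through the time change $\sigma$, including the slope-one extension beyond $\bar S$ that the paper uses in its case $S_j>\tilde S$. You also use the same change-of-variables estimate $\tilde\d(z',\bar z)\le \ell\,\tilde\d(z,\tilde z)$, where your $\ell$ plays the role of the paper's $\frac{1}{L_1}\vee 1$.

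The differences are only in packaging. The paper argues by contradiction along a sequence $z_j\to\tilde z$, treats $S_j\le\tilde S$ and $S_j>\tilde S$ separately, and invokes Lemma~\ref{L0} together with continuity of $\Psi$ to pass the costs to the limit. Your direct comparison of infima over balls handles both cases at once through the zero-extension convention, needs no continuity of the endpoint map, and gives the explicit radius $r/\ell$.
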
	
\begin{proof} Let  $\bar z:=(\bar S,\bar w^0,\bar w ,  \bar y^0,\bar  y,\bar\beta)\in X^{\cS}_{\tilde\W}$ be a process at which there is a local infimum gap w.r.t. $(X^{\cS}_{\tilde\W_+}, \tilde\d)$.  Suppose by contradiction that at some  $\tilde z:=(\tilde S,\tilde w^0,\tilde w ,  \tilde y^0,\tilde  y,\tilde\beta)\sim\bar z$  no local infimum gap  w.r.t.   $(X^{\cS}_{\tilde\W_+},\tilde\d)$ occurs. So, there exist a strictly increasing, bi-Lipschitz continuous, onto function  $\sigma:[0,\bar S]\to[0,\tilde S]$, such that $L_1\le \frac{d\sigma}{ds}\le L_2$ a.e. for some $L_2>L_1>0$, and 
$$
\begin{array}{l}
\ds\tilde S=\sigma(\bar S), \quad (\bar w^0,\bar w)(s)=(\tilde w^0,\tilde w)(\sigma(s))\cdot\frac{d\sigma}{ds}(s) \ \ \text{for a.e. $s\in[0,\bar S]$}, \\[1.0ex]
( \bar y^0,\bar  y,\bar\beta)(s)= (\tilde y^0,\tilde  y,\tilde\beta)(\sigma(s)) \ \ \text{for all $s\in[0,\bar S]$},
\end{array}
$$
and a sequence of processes $z_j:=(S_j,w^0_j,w_j,y^0_j,y_j,\beta_j)\in X^{\cS}_{\tilde\W_+}$  such that
$$
\tilde\d(z_j,\tilde z)<\frac{1}{j} \quad\text{for any $j\in\cN$, \ $j\ge1$.}
$$
From Lemma \ref{L0} it follows that  
$$
|S_j-\tilde S|, \ \  \|y_j^0-\tilde y^0\|_{L^\infty(\rr_+)},  \ \  \|\beta-\tilde\beta\|_{L^\infty(\rr_+)}< \frac{1}{j}, 
\quad  \|y_j -\tilde y \|_{L^\infty(\rr_+)}< \frac{C}{j},  
$$
for some $C>0$. As a consequence, we have $\Psi(y^0_j(S_j),y_j(S_j))\to\Psi(\tilde y^0(\tilde S),\tilde y(\tilde S))=\Psi(\bar y^0(\bar S),\bar y(\bar S))$ as $j\to+\infty$.  Possibly passing to a subsequence, we may assume that either
\vsm
\qquad\qquad (a) \  \( S_j \le \tilde{S} \) for every \( j \), \quad or \quad (b)
    \( S_j > \tilde{S} \) for every \( j \).
\vsm 
 \noindent {\em Case} (a). Considering the restriction of the inverse $\sigma^{-1}:[0,S_j]\to[0,\bar S_j]$, for any $j\ge1$ we introduce the feasible, embedded strict-sense process  $\bar z_j:=(\bar S_j,\bar w^0_j,\bar w_j,\bar y^0_j,\bar y_j,\bar \beta_j)\sim z_j$ given by
 $$
 \begin{array}{l}
\ds\bar S_j=\sigma^{-1}(S_j), \quad   (\bar w^0_j,\bar w_j)(s)=(w^0_j,w_j)(\sigma(s))\cdot\frac{d\sigma}{ds}(s) \ \ \text{for a.e. $s\in[0,\bar S_j]$}, \\[1.0ex]
( \bar y^0_j,\bar  y_j,\bar\beta_j)(s)= (y^0_j, y_j,\beta_j)(\sigma(s)) \ \ \text{for all $s\in[0,\bar S_j]$}.
\end{array}
$$
Using the time-change $\sigma=\sigma(s)$, we get
$$
\begin{array}{l}
\ds\tilde\d(\bar z_j,\bar z)=|\bar S_j-\bar S|+\int_0^{\bar S_j}\left[|\bar w^0_j(s)-\bar w^0(s)|+|\bar w_j(s) - \bar w(s)|\right]\,ds \\
\ds\ \quad +\int_{\bar S_j}^{\bar S}\left[\bar w^0(s)+|\bar w(s)|\right]\,ds  \\
\ds  \  = |\sigma^{-1}(S_j)-\sigma^{-1}(\tilde S)|+\int_0^{\bar S_j}| w^0_j(\sigma(s))-\tilde w^0(\sigma(s))| \frac{d\sigma}{ds}(s)\,ds \\
\ds \quad+\int_0^{\bar S_j}|w_j(\sigma(s)) - \tilde w(\sigma(s))|\frac{d\sigma}{ds}(s)\,ds+\int_{\bar S_j}^{\bar S}\left[\tilde w^0(\sigma(s))+|\tilde w(\sigma(s))|\right]\frac{d\sigma}{ds}(s)\,ds  \\
\ds \ \le \left(\frac{1}{L_1}\vee 1\right)\tilde\d(z_j,\tilde z)<\left(\frac{1}{L_1}\vee 1\right)\frac{1}{j}.
\end{array}
$$
Consequently, at $\bar z$ we cannot have a  local infimum gap w.r.t. $(X^{\cS}_{\tilde\W_+}, \tilde\d)$, in contradiction with the hypothesis.

 \noindent {\em Case} (b). For any $j\ge1$,   to introduce  a suitable feasible embedded strict-sense process equivalent to $z_j$, in this case we extend $\sigma^{-1}:[0,\tilde S]\to[0,\bar S]$ by considering   $\sigma_j^{-1}:[0,S_j]\to[0,\bar S_j]$,  given by
$$
\sigma_j^{-1}(\sigma):=\begin{cases}  \sigma^{-1}(\sigma) \ \ &\text{if $\sigma\in[0,\tilde S]$},\\
\bar S+(\sigma-\tilde S)  \ \ &\text{if $\sigma\in[\tilde S, S_j]$},
\end{cases}
$$
where $\bar S_j:=\bar S+(S_j-\tilde S)$. Notice that the inverse $\sigma_j:[0,\bar S_j]\to[0,S_j]$ of this function $\sigma_j^{-1}$ is
$$
\sigma_j (s)=\begin{cases}  \sigma(s) \ \ &\text{if $s\in[0,\bar S]$},\\
\tilde S+(s-\bar S)  \ \ &\text{if $s\in[\bar S, \bar S_j]$}.
\end{cases}
$$
Hence, for any $j\ge1$ we define $\bar z_j:=(\bar S_j,\bar w^0_j,\bar w_j,\bar y^0_j,\bar y_j,\bar \beta_j)\sim z_j$ as
$$
 \begin{array}{l}
\ds\bar S_j=\sigma_j^{-1}(S_j)=\bar S+(S_j-\tilde S),  \\[1.0ex]
 \ds   (\bar w^0_j,\bar w_j)(s)=(w^0_j,w_j)(\sigma_j(s))\cdot\frac{d\sigma_j}{ds}(s) \ \ \text{for a.e. $s\in[0,\bar S_j]$}, \\[1.0ex]
( \bar y^0_j,\bar  y_j,\bar\beta_j)(s)= (y^0_j, y_j,\beta_j)(\sigma_j(s)) \ \ \text{for all $s\in[0,\bar S_j]$}.
\end{array}
$$
Using the time-change $s'=\sigma_j(s)$, we now have
$$
\begin{array}{l}
\ds\tilde\d(\bar z_j,\bar z)=|\bar S_j-\bar S|+\int_0^{\bar S}\left[|\bar w^0_j(s)-\bar w^0(s)|+|\bar w_j(s) - \bar w(s)|\right]\,ds \\
\ds\qquad\qquad\qquad +\int_{\bar S}^{\bar S_j}\left[\bar w_j^0(s)+|\bar w_j(s)|\right]\,ds  \\
\ds\quad\quad = |S_j-\tilde S|+\int_0^{\bar S}\left[| w^0_j(\sigma_j(s))-\tilde w^0(\sigma(s))|+|w_j(\sigma_j(s)) - \tilde w(\sigma_j(s))|\right]\frac{d\sigma_j}{ds}(s)\,ds \\
\ds\qquad\qquad\qquad +\int_{\bar S}^{\bar S_j}\left[w_j^0(\tilde S+(s-\bar S))+|w_j(\tilde S+(s-\bar S))|\right]\,ds \le \tilde\d(z_j,\tilde z)< \frac{1}{j}.
\end{array}
$$
Again, this contradicts the existence  at $\bar z$ of a  local infimum gap w.r.t. $(X^{\cS}_{\tilde\W_+}, \tilde\d)$.
\end{proof}
	
\begin{lemma}\label{L2} At a  feasible canonical extended process $(\bar S,\bar w^0,\bar w ,  \bar y^0,\bar  y,\bar\beta)\in X^\cS_{\W}$   there is a local infimum  gap  w.r.t.  $(X^{\cS}_{\tilde\W_+},\tilde\d)$, 
			 if and only if    there is a  local infimum  gap  w.r.t.  
			$(X^\cS_{\W_+},\d)$, i.e. among canonical processes only.   
\end{lemma}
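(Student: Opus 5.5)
The plan is to prove the two implications separately, using that on canonical processes $\d\le\tilde\d\le 2\d$ and that every process in $X_{\tilde\W_+}$ is $\sim$-equivalent to its canonical parameterization, which lies in $X_{\W_+}$.

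\emph{Gap w.r.t. $(X^{\cS}_{\tilde\W_+},\tilde\d)$ implies gap w.r.t. $(X^\cS_{\W_+},\d)$.} This direction is immediate. Suppose $r>0$ works for the $\tilde\d$-gap. Any canonical embedded strict-sense process $z$ with $\d(z,\bar z)<r/2$ satisfies $\tilde\d(z,\bar z)\le 2\d(z,\bar z)<r$, and $X^\cS_{\W_+}\subset X^{\cS}_{\tilde\W_+}$. So the infimum over the smaller $\d$-ball is at least the infimum over the $\tilde\d$-ball, which is strictly greater than $\h(\bar y^0(\bar S),\bar y(\bar S))$.

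\emph{Gap w.r.t. $(X^\cS_{\W_+},\d)$ implies gap w.r.t. $(X^{\cS}_{\tilde\W_+},\tilde\d)$.} I argue by contradiction. Assume there are $z_j\in X^{\cS}_{\tilde\W_+}$ with $\tilde\d(z_j,\bar z)<1/j$ and $\limsup_j\h(z_j)\le\h(\bar z)$. Actually I only need costs not exceeding the infimum bound, so it suffices to produce canonical processes close in $\d$ with the same costs. Let $z_{j,c}$ be the canonical parameterization of $z_j$. It lies in $X^\cS_{\W_+}$, since $w^0>0$ a.e. is preserved by reparameterization, and it has the same endpoint cost and feasibility. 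The key estimate to aim for is $\d(z_{j,c},\bar z)\to0$.

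To get this estimate, write $\sigma_j:=y^0_j+\beta_j$ and $\bar\sigma:=\bar y^0+\bar\beta=\mathrm{id}$, the latter because $\bar z$ is canonical.
\begin{itemize}
\item By Lemma \ref{L0}, $\|\sigma_j-\mathrm{id}\|_{L^\infty}\le\tilde\d(z_j,\bar z)\to0$. Hence $S_{j,c}=\sigma_j(S_j)\to\bar S$.
\item The controls are $(w^0_{j,c},w_{j,c})=\big((w^0_j,w_j)/(w^0_j+|w_j|)\big)\circ\sigma_j^{-1}$.
\item Changing variables $s=\sigma_j(t)$, with $d\sigma_j=(w^0_j+|w_j|)\,dt$, the distance becomes
\[
\int_0^{S_j}\Big|(w^0_j,w_j)(t)-(w^0_j(t)+|w_j(t)|)\,(\bar w^0,\bar w)(\sigma_j(t))\Big|\,dt .
\]
\item Add and subtract $(\bar w^0,\bar w)(t)$ and use $|\bar w^0|+|\bar w|\le1$. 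The terms $\int|(w^0_j,w_j)-(\bar w^0,\bar w)|$ and $\int\big|(w^0_j+|w_j|)-1\big|$ are bounded by $C\tilde\d(z_j,\bar z)$.
\item The remaining term is $\int_0^{S_j}(w^0_j+|w_j|)\,|\bar w(\sigma_j(t))-\bar w(t)|\,dt$.
\end{itemize}

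This last term is the main obstacle, since $\bar w$ is only $L^\infty$ and $\sigma_j$ is merely close to the identity in sup norm. I would handle it by approximating $(\bar w^0,\bar w)$ in $L^1$ by a continuous function $\phi$ with $\|\bar w-\phi\|_{L^1}<\eps$. The contribution of $|\phi\circ\sigma_j-\phi|$ vanishes by uniform continuity of $\phi$ and $\sigma_j\to\mathrm{id}$, using the bound $\int(w^0_j+|w_j|)\le \bar S+\tilde\d$. The term $\int(w^0_j+|w_j|)|\bar w-\phi|\circ\sigma_j$ equals $\int|\bar w-\phi|\,ds<\eps$ after changing variables back. The term $\int(w^0_j+|w_j|)|\phi-\bar w|\,dt$ is split as $\int|\phi-\bar w|+\int\big|(w^0_j+|w_j|)-1\big|\,(|\phi|+1)$, which is $\le\eps+C\tilde\d$ if $\phi$ is taken bounded by $1$. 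Letting $j\to\infty$ and then $\eps\to0$ gives $\d(z_{j,c},\bar z)\to0$. This contradicts the $\d$-gap, since the $z_{j,c}$ are feasible canonical embedded strict-sense processes with cost $\h(z_j)$, which is below the $\d$-gap threshold. Endpoint mismatch beyond $S_j\wedge\bar S$ costs at most $|S_j-\bar S|$ in $\d$, by the extension-by-zero convention.
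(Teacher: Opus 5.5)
Your proof is correct and follows the paper's argument. The easy direction uses $\tilde\d\le 2\d$ on canonical processes; the converse argues by contradiction through the canonical reparameterizations of nearby processes in $X^{\cS}_{\tilde\W_+}$, combining $\sigma_j=y^0_j+\beta_j\to\mathrm{id}$ uniformly, a change of variables, and a continuous $L^1$-approximation of $(\bar w^0,\bar w)$ to control the shift term $\bar w\circ\sigma_j-\bar w$.

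The differences are cosmetic. You weight the shift term by $\sigma_j'=w^0_j+|w_j|$, which makes the change of variables exact and avoids the paper's bound on $\int|(\sigma_j^{-1})'-1|$. You also track the cost condition in the negation of the gap more explicitly than the paper does.
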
	
\begin{proof}  Ler $\bar z:=(\bar S,\bar w^0,\bar w ,  \bar y^0,\bar  y,\bar\beta)$ be a feasible canonical extended process. 
If at $\bar z$ there is a  local infimum  gap  w.r.t. feasible processes in  $(X^{\cS}_{\tilde\W_+},\tilde\d)$, by definition there are some $r>0$ and $\eps>0$ such that for any feasible embedded strict-sense process $z:= (S,w^0,w, y^0,y,\beta)\in X^{\cS}_{\tilde\W_+}$ such that $\tilde\d(z,\bar z)<r$, we have 
\begin{equation}\label{gap_L2}
	\Psi( \bar y^0(\bar S),\bar  y(\bar S))\le \Psi( y^0(S),y(S))-\eps.
\end{equation}
Now, for any canonical  feasible embedded strict-sense process $z$  with $\d( z,\bar z)<\frac{r}{2}$, we have $\tilde\d( z,\bar z)\le 2\d( z,\bar z)<r$. Therefore \eqref{gap_L2} holds true, and at $\bar z$ there is also a local infimum gap w.r.t.  to
			feasible canonical  processes in $(X^{\cS}_{\W_+},\d)$.
			
Conversely, assume now that  at $\bar z$ there is  a local infimum gap w.r.t.  to feasible canonical  processes in $(X^{\cS}_{\W_+},\d)$ only, but not  w.r.t. the larger set  $(X^{\cS}_{\tilde\W_+},\tilde\d)$. Since $\bar{z}$ is a canonical process, then there exists a sequence  of feasible (possibly non-canonical) embedded strict-sense  processes $z_j:=(S_j,w^0_j,w_j,y^0_j,y_j,\beta_j)\in X^{\cS}_{\tilde\W_+}$, such that
$$
\tilde\d(z_j,\bar z)<\frac{1}{j} \quad\text{for any $j\in\cN$, \ $j\ge1$.}
$$
For any $j\ge1$, let us associate with $z_j$ its canonical representative  $z^c_j:=(S^c_j,w^{0^c}_j,w_j,^c,y^{0^c}_j,y_j^c,\beta_j^c)$. Namely, let us take   $\sigma_j:[0,S_j]\to[0,S_j^c]$ defined as
$$
\begin{array}{l}
\ds\sigma_j(s):=y^0_j(s)+\beta_j(s)=\int_0^s\left[w^0_j(s')+|w_j(s')|\right]\,ds' \ \ \text{for all $s\in[0,S_j]$,} \\
\ds S_j^c:=\sigma_j(S_j)=y^0_j(S_j)+\beta_j(S_j),
\end{array}
$$
and consider the canonical  process  $z^c_j\in X^{\cS}_{\W_+}$  satisfying  
$$
 \begin{array}{l}
\ds (w^0_j,w_j)(s)=(w^{0^c}_j, w_j^c)(\sigma_j(s))\cdot\frac{d\sigma_j}{ds}(s) \ \ \text{for a.e. $s\in[0,S_j]$}, \\[1.0ex]
(y^0_j, y_j,\beta_j)(s)=(y^{0^c}_j,y_j^c,\beta_j^c) (\sigma_j(s)) \ \ \text{for all $s\in[0,S_j]$}.
\end{array}
$$
If  we extend $\sigma_j$ to $\rr_+$ by setting
$$
\sigma_j (s):=\begin{cases}  \sigma_j(s) \ \ &\text{if $s\in[0,S_j]$},\\
S_j^c+(s-S_j)  \ \ &\text{if $s\ge S_j$}, 
\end{cases}
$$
we have that the sequence $(\sigma_j)_j$ is an ${L^\infty(\rr_+)}$-approximation of the identity function $id$. Indeed, recalling that $\bar w^0+|\bar w|=1$  a.e. (because $\bar z$ is canonical),  we have
$$
\begin{array}{l}
\ds\int_0^{+\infty}\left|\frac{d\sigma_j}{ds}(s)-1\right|\,ds= \int_0^{S_j\land\bar S}\left| w^0_j(s)+|w_j(s)|-\bar w^0(s)-|\bar w(s)|\right|\,ds +\\
\ \ \quad\qquad +\begin{cases} 0 \ \ &\text{if $S_j\le \bar S$} \\
\int_{\bar S}^{S_j}\left|w^0_j(s)+|w_j(s)|\right|\,ds+|S_j-\bar S|  \ \ &\text{if $S_j> \bar S$}
\end{cases} \ \le\tilde\d(z_j,\bar z)<\frac{1}{j}.
\end{array}
$$ 
It follows that  $\|\sigma_j-id\|_{L^\infty(\rr_+)}\le\frac{1}{j}$. Let us now estimate $\d(z^c_j,\bar z)$. As in the proof of Lemma \ref{L1}, possibly passing to a subsequence, we may assume that either
\vsm
\qquad\qquad (a) \  \( S^c_j \le \bar{S} \) for every \( j \) \quad or \quad (b)
    \( S^c_j > \bar{S} \) for every \( j \).
\vsm 
 \noindent {\em Case} (a).     We have 
\begin{equation}\label{f1_lemma4.3}
\ds\d(z^c_j,\bar z)=|S^c_j-\bar S|+\int_0^{S^c_j}\left[|w^{0^c}_j(s)-\bar w^0(s)|+|w_j^c(s) - \bar w(s)|\right]\,ds,
\end{equation}
where
$$
\ds|S^c_j-\bar S|\le  |\sigma_j(S_j)-S_j|+|S_j-\bar S|<\frac{1}{j}+\frac{1}{j}=\frac{2}{j},
$$
and, using the time-change $s=\sigma_j(s')$, 
\begin{equation}\label{f2_lemma4.3}
\begin{array}{l}
\ds\int_0^{S^c_j}\left[|w^{0^c}_j(s)-\bar w^0(s)|+|w_j^c(s) - \bar w(s)|\right]\,ds \\[1.0ex]
 \quad\quad\qquad\ds=\int_0^{S_j}\left[|w^{0^c}_j(\sigma_j(s))-\bar w^0(\sigma_j(s))|+|w_j^c(\sigma_j(s)) - \bar w(\sigma_j(s))|\right] \frac{d\sigma_j}{ds}(s)\,ds
 \\[1.0ex]
 \quad\quad\qquad\ds=\int_0^{S_j}\left[\left|w^{0}_j(s)-\bar w^0(\sigma_j(s))\frac{d\sigma_j}{ds}(s)\right|+\left|w_j(s) - \bar w(\sigma_j(s))\frac{d\sigma_j}{ds}(s)\right|\right]\,ds.
\end{array}
\end{equation}   
Now, recalling how $\sigma_j$ approximates the identity and that $|\bar w^0|\le 1$ a.e.,  we have
 $$
\begin{array}{l}
\ds\int_0^{S_j} \left|w^{0}_j(s)-\bar w^0(\sigma_j(s))\frac{d\sigma_j}{ds}(s)\right| \,ds \\[1.0ex]
\quad\qquad \ds\le \int_0^{S_j}\left[|w^{0}_j(s)-\bar w^0(s)|+|\bar w^0(s)-\bar w^0(\sigma_j(s))|\right]\,ds \\[1.0ex]
\quad\qquad \ds+ \int_0^{S_j}\bar w^0(\sigma_j(s))\left|1-\frac{d\sigma_j}{ds}(s)\right|\,ds\le \frac{1}{j} + \int_0^{S_j}|\bar w^0(s)-\bar w^0(\sigma_j(s))|\,ds+\frac{1}{j}.
\end{array}
$$
By standard density results, for every $j\ge1$ there is a continuous function \( \bar w^0_j \in C([0,\bar S+2]) \), which we can assume  verifying $|\bar w^0_j(s)|\le 2$ for all $s$,  such that
$$
\|\bar w^0 - \bar w^0_j\|_{L^1(0,\bar S+2)} < \frac{1}{j}.
$$
Since \( \bar w^0_j \) is uniformly continuous on the compact interval \([0,\bar S+2]\) and  \( \sigma_j \to \mathrm{id} \) uniformly in $\rr_+$ (so, $\sigma_j(\bar S+1)<\bar S+1+ \frac{1}{j}\le \bar S+2$),  there exists \( N\in \mathbb{N} \) such that for all \( j \geq N \),
 \[
\int_0^{\bar S+1} |\bar w^0_j(\sigma_j(s)) - \bar w^0_j(s)| \, ds < \frac{1}{j}.
\]
Recalling that  $S_j<\bar S+\frac{1}{j}\le \bar S+1$, we obtain  
\[
\begin{array}{l}
\ds \int_0^{S_j}|\bar w^0(\sigma_j(s))-\bar w^0(s)|\,ds \leq \int_0^{S_j} |\bar w^0(\sigma_j(s)) - \bar w^0_j(\sigma_j(s))| \, ds \\
\ds\quad\qquad\qquad + \int_0^{S_j} |\bar w^0_j(\sigma_j(s)) -  \bar w^0_j(s)| \, ds + \int_0^{S_j} | \bar w^0_j(s) - \bar w^0(s)| \, ds \\
\ds\qquad\qquad\qquad\qquad\qquad\qquad <   \int_0^{S_j} |\bar w^0(\sigma_j(s)) - \bar w^0_j(\sigma_j(s))| \, ds+\frac{2}{j}.
\end{array}
\] 
To estimate $  \int_0^{S_j} |\bar w^0(\sigma_j(s)) - \bar w^0_j(\sigma_j(s))| \, ds$, we make a change of variables using the fact that each \(\sigma_j \) is bi-Lipschitz. Let \( s' = \sigma_j(s) \), then \( s = \sigma_j^{-1}(s') \), and we get
\[
\begin{array}{l}
\ds\int_0^{S_j} |\bar w^0(\sigma_j(s)) - \bar w^0_j(\sigma_j(s))| \, ds = \int_{0}^{\sigma_j(S_j)} |\bar w^0(s') - \bar w^0_j(s')|\frac{d\sigma_j^{-1}}{ds'}   \, ds' \\
\ds\qquad\qquad\qquad\qquad\le\int_{0}^{\bar S+2} |\bar w^0(s') - \bar w^0_j(s')|\frac{d\sigma_j^{-1}}{ds'}   \, ds' \\
\ds\qquad\qquad\qquad\qquad \le \|\bar w^0 - \bar w^0_j\|_{L^1(0,\bar S+2)}+3\int_0^{+\infty}\left|\frac{d\sigma_j^{-1}}{ds'}(s') -1\right|\,ds'<\frac{4}{j},
\end{array}
\]
since, using the time-change $s=\sigma_j^{-1}(s')$, we have
$$
\begin{array}{l}
\ds\int_0^{+\infty}\left|\frac{d\sigma_j^{-1}}{ds'}(s') -1\right|\,ds'=\int_0^{+\infty}\left|\frac{1}{\frac{d\sigma_j}{ds}(\sigma_j^{-1}(s'))} -1\right|\,ds'  \\
\ds\quad\quad\quad=\int_0^{+\infty}\left|\frac{d\sigma_j}{ds}(\sigma_j^{-1}(s')) -1\right|\frac{d\sigma_j^{-1}}{ds'}(s')\,ds'=\int_0^{+\infty}\left|\frac{d\sigma_j}{ds}(s) -1\right|\,ds<\frac{1}{j}.
\end{array}
$$
Analogous calculations imply that 
$$
 \int_0^{S_j} \left|w_j(s)-\bar w(\sigma_j(s))\frac{d\sigma_j}{ds}(s)\right| \,ds<\frac{4}{j}
 $$
 Hence,  $\ds\d(z^c_j,\bar z)< \frac{10}{j}$, in contradiction with the hypothesis  that at $\bar z$ there is a local infimum gap w.r.t. canonical embedded strict-sense processes. Hence, in view of \eqref{f1_lemma4.3}, \eqref{f2_lemma4.3},  the proof in case~(a) is concluded.

In case~(b), the proof follows from that of case~(a), since  
$$
\begin{array}{l}
\ds\d(z^c_j,\bar z)=|S^c_j-\bar S|+\int_0^{\bar S}\left[|w^{0^c}_j(s)-\bar w^0(s)|+|w_j^c(s) - \bar w(s)|\right]\,ds \\
\ds \ \ \quad\qquad\le |S^c_j-\bar S|+\int_0^{S^c_j}\left[|w^{0^c}_j(s)-\bar w^0(s)|+|w_j^c(s) - \bar w(s)|\right]\,ds.
\end{array}
$$  
\end{proof}
\subsection{Conclusion of the proof of Theorem \ref{ThIsolated}}
The three  lemmas of the previous subsection  justify the fact that it is not restrictive to consider a local infimum gap only at canonical processes and  solely among canonical processes. As a  main consequence one has  that reference canonical extended process $\bar z=(\bar S,\bar w^0,\bar w,  \bar y^0,\bar  y,\bar\beta)$ exhibits a local infimum gap even among the non-canonical processes in $(X^{\cS}_{\tilde\W_+},\tilde\d)$. 

	 For every    $r>0$,  let us now  define the  following sets:   
	  $$
		\begin{array}{l}
			\mathcal{R'}^{r}_{\tilde\W_+} := \Big\{(y^0,y,\beta)(S): \   z=(S, w^0,w,y^0,y,\beta)\in X_{\tilde\W_+}, 
			\ \tilde\d \left(z, \bar z\right)< r  \Big\},
			\\[1.0ex]
			\mathcal{R'}_{\tilde\W}^{r}:=  \Big\{(y^0,y,\beta)(S): \   z=(S, w^0,w,y^0,y,\beta)\in X_{\tilde\W}, 
			\ \tilde\d  \left(z, \bar z\right)< r  \Big\},
		\end{array}
		$$ 
as the   \textit{reachable set} and the \textit{extended  reachable set} (of radius $r$), respectively.
	
From this point onward, the proof proceeds in a manner entirely analogous to the proof of   \cite[Theorem~3.1]{MPR}, and is therefore omitted. In particular, we note that the needle variations and bracket-like variations introduced in \cite{MPR} yield approximations of the reference control $(\bar w^0,\bar w)$ that converge to it in the control distance~$\tilde{\d}$. Thanks to this property, they allow for the construction of a $QDQ$ approximating cone to~${\mathcal{R}'}^{r}_{\tilde\W_+}$ at $(\bar{y}^0(\bar{S}), \bar{y}(\bar{S}))$.




%
%
%
%
%
%
%

\end{document}